\newcommand{\mc}[1]{\mathcal{#1}}
\newcommand{\stab}{\mathrm{Stab}}
\newcommand{\bH}{\mathbb{H}}
\newcommand{\cd}{\mathrm{cd}}
\newcommand{\aicd}{\mathrm{ai\check{C}d}}
\def\co{\colon\thinspace}
\def\R{\mathbb{R}}
\def\Z{\mathbb{Z}}
\def\N{\mathbb{N}}
\newtheorem{theorem}{Theorem}[section]
\newtheorem{lemma}[theorem]{Lemma}
\newtheorem{corollary}[theorem]{Corollary}
\newtheorem{proposition}[theorem]{Proposition}
\newtheorem{conjecture}[theorem]{Conjecture}
\newtheorem{claim}{Claim}
\theoremstyle{definition}
\newtheorem{remark}[theorem]{Remark}
\newtheorem{definition}[theorem]{Definition}
\begin{document}

\title{The Bowditch boundary of $(G,\mc{H})$ when $G$ is hyperbolic}
\author{Jason Fox Manning}

\begin{abstract}
In this note we use Yaman's dynamical characterization of relative hyperbolicity to prove a theorem of Bowditch about relatively hyperbolic pairs $(G,\mc{H})$ with $G$ hyperbolic.  Our proof additionally gives a description of the Bowditch boundary of such a pair.  This description of the boundary was previously obtained by Tran \cite{Tran13}.
\end{abstract} 

\thanks{The support of the Simons Foundation (\#524176 to J. Manning) and the National Science Foundation (DMS-0804369) is gratefully acknowledged.}
\maketitle

\section{Introduction}
Let $G$ be a group.  A collection $\mc{H} = \{H_1,\ldots,H_n\}$ of
subgroups of $G$ is said to be \emph{almost malnormal} if every
infinite intersection of the form
$H_i\cap g^{-1} H_j g$ satisfies both $i=j$ and
$g\in H_i$.

In an extremely influential paper from 1999, published in 2012 in \emph{IJAC} \cite{bowditch:relhyp}, Bowditch proves the following useful theorem:

\begin{theorem}\cite[Theorem 7.11]{bowditch:relhyp}\label{t:bowditch}
Let $G$ be a nonelementary hyperbolic group, and let $\mc{H} = \{H_1,\ldots,H_n\}$
be an almost malnormal collection of proper, quasiconvex subgroups of
$G$.  Then $G$ is hyperbolic relative to $\mc{H}$.
\end{theorem}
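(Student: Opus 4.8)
The strategy is to realize the Bowditch boundary as a quotient of $\partial G$ and then apply Yaman's dynamical criterion for relative hyperbolicity. We may assume each $H_i$ is infinite --- a finite $H_i$ has empty limit set in $\partial G$ and plays no role --- and then almost malnormality forces each $H_i$ to have infinite index in $G$, so its limit set $\Lambda_i=\Lambda(H_i)$ is a closed, nowhere dense subset of $\partial G$. Define a relation on $\partial G$ by $x\sim y$ if $x=y$ or $\{x,y\}\subseteq g\Lambda_i$ for some $g\in G$ and some $i$. Reflexivity and symmetry are clear, and transitivity is exactly where almost malnormality enters: if $g\Lambda_i$ meets $g'\Lambda_j$ then, using the standard fact that $\Lambda(A\cap B)=\Lambda(A)\cap\Lambda(B)$ for quasiconvex subgroups $A,B\le G$, the subgroup $gH_ig^{-1}\cap g'H_jg'^{-1}$ has nonempty limit set, hence is infinite, so $i=j$ and $g\Lambda_i=g'\Lambda_j$. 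Thus $\sim$ is a $G$-invariant equivalence relation whose nondegenerate classes are precisely the distinct translates $g\Lambda_i$; let $M=\partial G/{\sim}$ with quotient map $q\co\partial G\to M$.

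Next one checks that $M$ is a non-empty perfect compact metrizable space carrying a convergence action of $G$. Compactness is automatic. Metrizability reduces to showing $\sim$ is closed in $\partial G\times\partial G$ (then $q$ is a closed map, $M$ is Hausdorff, and a Hausdorff quotient of a compact metric space is metrizable); this in turn follows from the observation that $\{g\Lambda_i : g\in G,\ 1\le i\le n\}$ is a \emph{null family} --- in a fixed visual metric, all but finitely many of these sets have diameter below any prescribed bound --- which follows from quasiconvexity of the $H_i$ together with properness of the $G$-action on the Cayley graph. Since $\partial G$ is perfect and each $\sim$-class is either a point or a nowhere dense set, $M$ has no isolated points. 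Finally, because $q$ is uniformly continuous and the $q$-preimage of a compact subset avoiding a point $q(\eta)$ is a compact subset avoiding $\eta$, the convergence dynamics of $G$ on $\partial G$ push forward to a convergence action of $G$ on $M$.

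The next step is to identify the bounded parabolic points. Fix $i$. If $g\Lambda_i=\Lambda_i$ then $H_i\cap gH_ig^{-1}$ has nonempty limit set, hence is infinite, so $g\in H_i$ by almost malnormality; thus $\stab_G(\Lambda_i)=H_i$ and hence $\stab_G(q(g\Lambda_i))=gH_ig^{-1}$. The transitivity computation also gives $q^{-1}q(\Lambda_i)=\Lambda_i$, so $M\setminus\{q(\Lambda_i)\}=q(\partial G\setminus\Lambda_i)$; since $H_i$ is quasiconvex it acts cocompactly on $\partial G\setminus\Lambda_i$, and cocompactness passes to the continuous equivariant image, so $q(\Lambda_i)$ is a bounded parabolic point with stabilizer $H_i$. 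Almost malnormality also prevents $H_i$ from being conjugate into $H_j$ for $i\ne j$, so these points fall into exactly $n$ orbits, with stabilizers the members of $\mc{H}$ up to conjugacy.

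It remains --- and this is the heart of the matter --- to show that every other point of $M$, that is every $q(\xi)$ with $\xi\notin B:=\bigcup_{g,i}g\Lambda_i$, is a conical limit point. Choose a geodesic ray $\rho$ from $1$ to $\xi$. After passing to a subsequence, $(\rho(n)^{-1})_n$ converges locally uniformly on $\partial G\setminus\{\xi\}$ to a point $\tau$ (its source being $\xi=\lim\rho(n)$) and $\rho(n)^{-1}\xi\to\alpha$, where a Gromov-product estimate (the geodesic ray $\rho(n)^{-1}\rho$ passes through $1$, so its two ends have bounded Gromov product at $1$) shows $\alpha\ne\tau$ and lets us take the limit geodesic $[\alpha,\tau]$ to pass through $1$. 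It suffices to prove $q(\alpha)\ne q(\tau)$. If not, then $\alpha,\tau\in g_*\Lambda_i$ for some $g_*,i$, so $[\alpha,\tau]$ lies in a bounded neighborhood of the quasiconvex coset $g_*H_i$; translating by $\rho(n)$ and using $\rho(n)^{-1}\rho([n,\infty))\to[1,\alpha]$, one obtains a constant $C$ so that for every $R$, and all large $n$, the subpath $\rho([n,n+R])$ lies in the $C$-neighborhood of the coset $\rho(n)g_*H_i$. Comparing $n$ with $n+1$, the $C$-neighborhoods of the cosets $\rho(n)g_*H_i$ and $\rho(n+1)g_*H_i$ then meet in a subpath of $\rho$ of diameter roughly $R$; but there is a constant $D_0(C)$ bounding the diameter of the intersection of the $C$-neighborhoods of any two \emph{distinct} cosets of $H_i$ --- this is bounded packing of quasiconvex subgroups, sharpened using almost malnormality via a limiting argument that produces a bi-infinite geodesic whose endpoints lie in $\Lambda(H_i\cap g_0H_ig_0^{-1})=\emptyset$ --- so, once $R>D_0(C)$, the two cosets coincide, and chaining gives that $\rho$ is eventually contained in the $C$-neighborhood of a single coset $cH_i$. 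Then $\xi\in c\Lambda_i\subseteq B$, a contradiction. With this in hand, Yaman's theorem applies: $G$ is hyperbolic relative to $\mc{H}$, and $M$ is equivariantly homeomorphic to the Bowditch boundary of $(G,\mc{H})$. The genuine work is concentrated in this last paragraph --- in particular the uniform packing estimate, which is where quasiconvexity and almost malnormality must be combined --- together with the null-family input of the second paragraph.
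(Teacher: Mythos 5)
Your overall strategy is the same as the paper's: form the decomposition space $M$ of $\partial G$ whose non-singleton pieces are the translates $g\Lambda_i$, check it is perfect, compact, metrizable, that the $G$-action is convergence, identify the images of the $g\Lambda_i$ as bounded parabolic points, show all other points are conical, and invoke Yaman. The first three steps are fine (your null-family/closed-relation route to metrizability and your citation of cocompactness of a quasiconvex subgroup on $\partial G\setminus\Lambda(H)$ are acceptable substitutes for the paper's upper-semicontinuity lemma and its explicit ``coarsely closest point'' argument). But the conical limit point step --- which you correctly call the heart of the matter --- has a genuine gap. Your fellow-traveling estimate ``for every $R$ and all large $n$, $\rho([n,n+R])\subseteq N_C(\rho(n)g_*H_i)$'' is obtained from convergence of the translated rays, so it holds only for $n$ running along the subsequence you extracted, not for all large integers $n$. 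Consecutive terms of that subsequence can be arbitrarily far apart, so the segments $\rho([n,n+R])$ for successive subsequence times need not overlap at all, and the ``compare $n$ with $n+1$, apply bounded packing, and chain'' step collapses.

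Worse, the statement you reduce to --- that $q(\alpha)\neq q(\tau)$ for an \emph{arbitrary} convergent subsequence --- is simply false, so no repair of the chaining can save this reduction. Take $G=F(a,b)$, $\mc{H}=\{\langle a\rangle\}$, and let $\rho$ be the ray spelling $aba^2ba^3b\cdots$, so $\xi$ lies in no translate of $\Lambda(\langle a\rangle)$. Choosing $n_k$ to be the midpoint of the $a^k$ block, one gets $\rho(n_k)^{-1}\xi\to a^{\infty}$ and $\rho(n_k)^{-1}z\to a^{-\infty}$ for $z\neq\xi$, i.e.\ $\alpha$ and $\tau$ both lie in $\Lambda(\langle a\rangle)$ and are identified in $M$, even though $\xi$ is conical. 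The point is that the subsequence must be chosen with care, and this is exactly what the paper's Lemma \ref{l:sequence} provides: using the bounded coset intersection property one selects times $n_i$ along $\rho$ so that beyond $\gamma(n_i)$ the ray cannot stay in the $C$-neighborhood of \emph{any} peripheral coset for length more than a uniform $\chi$; then if the limits were identified, pulling back a segment of the limit geodesic of length greater than $\chi$ into $N_C$ of a single coset contradicts that choice directly, with no chaining needed. Your write-up is missing this selection device (or some substitute for it), and without it the contradiction does not go through.
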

\begin{remark}
  The converse to this theorem also holds and is implicit in Bowditch's work.  If $(G,\mc{H})$ is any
  relatively hyperbolic pair, then the collection $\mc{H}$ is almost
  malnormal by \cite[Proposition 2.36]{osin:relhypbook} (cf. \cite[p. 4]{bowditch:relhyp}).  Moreover the
  elements of $\mc{H}$ are undistorted in $G$ \cite[Lemma 5.4]{osin:relhypbook} (cf. \cite[Lemma 3.5]{bowditch:relhyp}).  
  Undistorted subgroups of a hyperbolic group are quasiconvex.
\end{remark}

In this note, we give
a proof of Theorem \ref{t:bowditch}
which differs from Bowditch's.  The strategy we follow is
to exploit the dynamical characterization of relative hyperbolicity
given by Yaman in \cite{yaman04}.   By doing so, we are able to obtain
some more information about the pair $(G,\mc{H})$.  In particular, we
obtain an explicit description of its Bowditch boundary
$\partial(G,\mc{H})$.  (This same strategy was applied by Dahmani to describe the boundary of certain amalgams of relatively hyperbolic groups in \cite{Dahmani03}.)
Let $\partial G$ be the Gromov boundary of the group $G$.
If $H$ is quasiconvex in a hyperbolic group
$G$, its limit set $\Lambda (H) \subset \partial G$ is homeomorphic to
the Gromov boundary $\partial H$ of $H$.  Our proof of Theorem \ref{t:bowditch} also yields the following result (previously obtained by Tran \cite{Tran13}), which says that $\partial(G,\mc{H})$ is obtained by smashing the limit sets of $gHg^{-1}$ to points, for $H\in \mc{H}$ and $g\in G$.  
\begin{theorem}\label{t:decomp} 
  Let $G$ be hyperbolic,  and let
  $\mc{H}$ be an almost malnormal collection of infinite quasi-convex proper subgroups
  of $G$.  Let $\mathcal{L}$ be the set of
  $G$--translates of limit sets of elements of $\mc{H}$.
  The Bowditch boundary
  $\partial(G,\mc{H})$ is obtained from the Gromov boundary $\partial
  G$ as a decomposition space $\partial G/ \mathcal{L}$.
\end{theorem}

\begin{remark}\label{rk:history}
  After I posted a version of this paper on the arXiv, I learned that Theorem \ref{t:decomp} was already well-known.  See in particular the main result of Tran's paper \cite{Tran13} which additionally gives a similar description of the Bowditch boundary in terms of a CAT$(0)$ boundary when $G$ is CAT$(0)$ and relatively hyperbolic.  Tran also points out previous results of Gerasimov and Gerasimov--Potyagailo \cite{Gerasimov12,GerasimovPotyagailo13}, or alternatively Matsuda--Oguni--Yamagata \cite{Matsuda12} which can be used to give other proofs of Theorem \ref{t:decomp}.  More recently, an ``HHS'' proof can be found in \cite[Section 6]{Spriano}.

  If there is an advantage to the current approach, it is that we obtain a proof of both Theorems \ref{t:bowditch} and \ref{t:decomp} at the same time.
\end{remark}

One consequence of the explicit description is a bound on the dimension of such a Bowditch boundary.
\begin{corollary}\label{dimcor}
  Let $G$ be a hyperbolic group and $\mc{H}$ an almost malnormal collection of
  infinite quasi-convex proper subgroups.  Then $\dim \partial
  (G,\mc{H})\leq \dim \partial  G + 1$.
\end{corollary}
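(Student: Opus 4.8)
The plan is to feed Theorem~\ref{t:decomp} into two elementary facts from (separable metric) dimension theory. By that theorem, $\partial(G,\mc{H})$ is the decomposition space $\partial G/\mathcal{L}$; write $q\co\partial G\to\partial G/\mathcal{L}$ for the quotient map and set $C:=q\bigl(\bigcup_{L\in\mathcal{L}}L\bigr)$, the image of the collapsed part. The key structural observation is that $\partial(G,\mc{H})$ decomposes as $C$ together with its complement $\partial(G,\mc{H})\setminus C$, and that each of these two subspaces has controlled covering dimension.

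First I would bound the complement. Since each $L\in\mathcal{L}$ is crushed to a single point while points of $\partial G\setminus\bigcup\mathcal{L}$ lie in singleton decomposition elements, $q$ restricts to a continuous bijection from $\partial G\setminus\bigcup\mathcal{L}$ onto $\partial(G,\mc{H})\setminus C$; compactness of $\partial G$ plus the fact that every point of $\partial(G,\mc{H})\setminus C$ has singleton preimage makes this restriction a homeomorphism onto its image (a short net/compactness argument). Hence $\partial(G,\mc{H})\setminus C$ is homeomorphic to a subspace of $\partial G$, so by monotonicity of covering dimension on subspaces of separable metric spaces its dimension is at most $\dim\partial G$. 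Next I would bound $C$: because $G$ is countable, the family $\mathcal{L}$ of $G$--translates of the finitely many limit sets $\Lambda(H_i)$ is countable, so $C$ is a countable subset of the compact metrizable space $\partial(G,\mc{H})$, and a countable metrizable space has covering dimension $0$.

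To finish I would invoke the addition inequality $\dim(X)\le\dim(A)+\dim(B)+1$ for a union $X=A\cup B$ of subspaces of a separable metric space, applied with $X=\partial(G,\mc{H})$, $A=\partial(G,\mc{H})\setminus C$, and $B=C$; this yields $\dim\partial(G,\mc{H})\le\dim\partial G+0+1$. (The finiteness of $\dim\partial G$, needed only so the bound is not vacuous, is part of the standard theory of boundaries of hyperbolic groups; if it failed the inequality would hold trivially.)

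Honestly, the substantive content here is entirely in Theorem~\ref{t:decomp}; the corollary is a short dimension-theoretic rider, and the only point demanding a moment's care is verifying that $q$ embeds the non-collapsed part $\partial G\setminus\bigcup\mathcal{L}$ into $\partial(G,\mc{H})$ --- which uses that the decomposition is upper semicontinuous, automatic once we know (from Theorem~\ref{t:decomp}) that $\partial(G,\mc{H})$ is Hausdorff. As an alternative one could simply quote the general principle that collapsing a null family of pairwise disjoint closed subsets of a compact metric space raises covering dimension by at most one: the family $\mathcal{L}$ is null with respect to any visual metric on $\partial G$, and by malnormality it is pairwise disjoint (using $\Lambda(A\cap B)=\Lambda(A)\cap\Lambda(B)$ for quasi-convex $A,B$ in a hyperbolic group, so that $\Lambda(H_i)\cap g\Lambda(H_j)=\Lambda(H_i\cap gH_jg^{-1})$ is empty unless $gH_jg^{-1}=H_i$), so such a statement applies verbatim.
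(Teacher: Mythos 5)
Your proof is correct and is essentially the paper's own argument: the paper likewise writes $\partial(G,\mc{H})$ as the union of the countable set of images of the collapsed limit sets with a subset of $\partial G$ and applies the Addition Theorem. The extra care you take in checking that $q$ embeds $\partial G\setminus\bigcup\mathcal{L}$ (via the closedness of the quotient map onto the metrizable quotient) is a detail the paper leaves implicit, not a different route.
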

\begin{proof}
  This follows from the Subspace and Addition Theorems of dimension theory.
  By Theorem
  \ref{t:decomp}, $\partial(G,\mc{H})$ can be written as a 
  union of a countable set $A$ (coming from the limit sets of the
  conjugates of the elements of $\mc{H}$) with a subspace $B$ of $\partial
  G$.  The Subspace Theorem implies $\dim(B)\leq \dim\partial G$, and the Addition Theorem implies $\dim(A\cup B)\le \dim A + \dim B + 1$.
\end{proof}

We can see this as some weak evidence for the following conjecture.  (Here $\cd(G,\mc{H})$ is the maximum $n$ so $H^n(G,\mc{H};M)\neq 0$ for some $\Z G$--module $M$.)
\begin{conjecture}\cite{me:MW1}\label{conj:dimeq}
  Let $(G,\mc{H})$ be relatively hyperbolic and type $F$.  Then
  \begin{equation*}
    \dim\partial(G,\mc{H})=\mathrm{cd}(G,\mc{H})-1.
  \end{equation*}
\end{conjecture}
In the absolute setting ($\mc{H} = \emptyset$) Conjecture \ref{conj:dimeq} is a theorem of Bestvina and Mess \cite{bestvinamess}.  It is shown in \cite{me:MW1} that it also holds in case $\cd(G)<\cd(G,\mc{H})$.

    Here is the connection between Corollary \ref{dimcor} and the conjecture.
In \cite{me:MW1}  it is shown that if $(G,\mc{H})$ is relatively hyperbolic and type $F_\infty$, then for all $k\geq 0$ there is an isomorphism
\begin{equation}\label{mwiso}\tag{$*$}
  \check{H}^k(\partial(G,\mc{H});\Z) \cong H^{k+1}(G,\mc{H};\Z G),
\end{equation}
where the left-hand side is reduced \v{C}ech cohomology and the right-hand side is relative group cohomology as defined for example in \cite{BE78}.\footnote{In the absolute case, this is another theorem of Bestvina--Mess \cite{bestvinamess}.  In the case of geometrically finite groups of isometries of $\bH^n$ it is due to Kapovich \cite[Proposition 9.6]{KapovichMR2491697}.}
It follows that the inequality $\dim\partial(G,\mc{H})\geq\mathrm{cd}(G,\mc{H})-1$ always holds for a type $F$ pair, since for any space $X$ we have the inequality
\[ \dim(X)\geq \max\{k\mid \check{H}^k(X;\Z)\neq 0 \}\eqqcolon\aicd(X).\]
(The notation `$\aicd$' stands for \emph{absolute integral \v{C}ech dimension}.)
The statements \cite[Corollaries 1.3(b) and 1.4(b)]{bestvinamess} combined show that, for a hyperbolic group, $\aicd(\partial G)=\dim(\partial G)$.
The isomorphisms \eqref{mwiso} and the long exact sequence of a group pair then give
\begin{equation}\label{aicdineq}\tag{$\dagger$} \aicd\left(\partial(G,\mc{H})\right)\leq \aicd(\partial G)+1 = \dim(\partial G) + 1. \end{equation}
Corollary \ref{dimcor} strengthens \eqref{aicdineq} in exactly the way that Conjecture \ref{conj:dimeq} would predict.

We next recall the definition of a convergence group.
\begin{definition}
Suppose that $M$ is a compact metrizable space with at least $3$
points, and let $G$ act on $M$ by homeomorphisms.  The action is a
\emph{convergence group action} if the induced action on the space
$\Theta^3(M)$ of
unordered triples of distinct points in $M$ is properly discontinuous.

An element $g\in G$ is \emph{loxodromic} if it has infinite order and
fixes exactly two points of $M$.

A point $p\in M$ is a \emph{bounded parabolic point} if $\stab_G(p)$
contains no loxodromics, and acts cocompactly on $M\setminus\{p\}$.

A point $p\in M$ is a \emph{conical limit point} if there is a
sequence $\{g_i\}$ in $G$ and a pair of points $a\neq b$ in $M$ so
that:
\begin{enumerate}
\item $\lim_{i\to \infty} g_i(p) = a$, and
\item $\lim_{i\to\infty} g_i(x) = b$ for all $x\in M\setminus\{p\}$.
\end{enumerate}

A convergence group action of $G$ on $M$ is \emph{geometrically finite} if every
point in $M$ is either a bounded parabolic point or a conical limit point.
\end{definition}

Bowditch proved in \cite{bowditch98} that if $G$ acts on $M$ as a
convergence group and every point of $M$ is a conical limit point,
then $G$ is hyperbolic.  Conversely, if $G$ is hyperbolic, then $G$
acts as a convergence group on $\partial G$, and every point in
$\partial G$ is a conical limit point.
For general geometrically finite actions, we
have the following result of Yaman:

\begin{theorem}\cite[Theorem 0.1]{yaman04}\label{t:yaman}
Suppose that $M$ is a non-empty perfect metrizable compact space, and
suppose that $G$ acts on $M$ as a geometrically finite convergence
group.  Let $B\subset M$ be the set of bounded parabolic points.
Let $\{p_1,\ldots,p_n\}$ be a set of orbit representatives for
the action of $G$ on $B$.  For each $i$ let $P_i$ be the stabilizer in
$G$ of $p_i$, and let $\mc{P} = \{P_1,\ldots,P_n\}$.  

Then $(G,\mc{P})$ is relatively hyperbolic and $M$ is equivariantly homeomorphic to $\partial(G,\mc{P})$.
\end{theorem}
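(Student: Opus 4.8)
The plan is to prove this by verifying one of Bowditch's equivalent definitions of relative hyperbolicity from \cite{bowditch:relhyp}: $(G,\mc{P})$ is relatively hyperbolic precisely when $G$ acts on a connected, fine, $\delta$-hyperbolic graph $K$ with finitely many orbits of edges, with every edge stabilizer finite, and with the conjugacy classes of infinite-valence vertex stabilizers being exactly $\mc{P}$. The whole argument then reduces to manufacturing such a $K$ from the dynamical data in a $G$-equivariant way and checking its properties; along the way one simultaneously identifies $\partial K$ with $M$.

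First I would make precise the finiteness already hinted at in the statement, that $B/G$ is finite: this is a compactness argument using proper discontinuity of the action on $\Theta^3(M)$ together with cocompactness of each $\stab_G(p)$ on $M\setminus\{p\}$ (and can be quoted from Tukia's work on geometrically finite convergence groups). Using bounded parabolicity again, I would fix a $G$-equivariant \emph{horoball system}: an assignment $p\mapsto H_p$ of a shrinkable neighborhood of each $p\in B$ with $gH_p=H_{gp}$, so that horoballs at points in distinct $G$-orbits are disjoint once shrunk far enough and so that the complement of the open horoballs remains ``thick,'' i.e.\ the corresponding portion of $\Theta^3(M)$ is $G$-cocompact by geometric finiteness. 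Then, following Bowditch's method of promoting a $G$-invariant family of connected subsets of $M$ to a fine hyperbolic graph, I would take a $G$-finite family $\mc{E}$ of connected subsets of $M$ rich enough to interpolate between the horoballs, let its members be the vertices of $K$ with two of them joined when they overlap, and adjoin one cone vertex $v_p$ for each $p\in B$, with $\stab_G(v_p)=\stab_G(p)$, joined by an edge to every member of $\mc{E}$ that meets $H_p$.

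The combinatorial bookkeeping is then the routine part. Connectivity holds because the convergence dynamics lets one link any two vertices by a chain of overlapping sets, rerouted through a cone vertex $v_p$ whenever the chain would otherwise be forced into the cusp at $p$. Finitely many edge orbits and finiteness of all edge stabilizers both follow from proper discontinuity on $\Theta^3(M)$: cocompactness of the thick part controls the non-cone edges, cocompactness of $\stab_G(p)$ on $M\setminus\{p\}$ controls the edges at $v_p$, and any edge stabilizer embeds in the finite stabilizer of a triple. Finally, the only infinite-valence vertices are the $v_p$, and $\stab_G(v_p)$ runs over exactly the conjugates of the $P_i$ as $p$ runs over $B$, which is the required peripheral structure.

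The main obstacle is showing that $K$ is \emph{hyperbolic} and \emph{fine}; this is where the dynamical hypotheses must genuinely be used. For hyperbolicity the idea is that $M$ is a natural boundary of $K$ and that geodesics in $K$ are governed by the convergence dynamics: a conical limit point produces an honest quasigeodesic ray into the thick part, while near a bounded parabolic point $p$ every path is funneled through the single cone vertex $v_p$, so there is no room for fat triangles; one makes this precise with a local-to-global (``guessing geodesics'') criterion, controlling the interaction of distinct horoballs by appeal to proper discontinuity on $\Theta^3(M)$. Fineness, namely that only finitely many embedded circuits of a given length pass through a given edge, reduces similarly to genuine separation of the horoballs together with cocompactness of the thick part. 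Granting hyperbolicity and fineness, Bowditch's characterization yields that $G$ is hyperbolic relative to $\mc{P}$, as claimed.
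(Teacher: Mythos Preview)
The paper does not prove this theorem at all: it is quoted verbatim from Yaman \cite{yaman04} and used as a black box in the proof of Theorem~\ref{t:bowditch}. So there is no ``paper's own proof'' to compare your proposal against.

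That said, your outline is a reasonable high-level summary of the strategy Yaman actually follows, which in turn adapts Bowditch's annulus-system machinery from \cite{bowditch98}. A few places where your sketch is looser than the real argument: the phrase ``a $G$-finite family $\mc{E}$ of connected subsets of $M$ rich enough to interpolate between the horoballs'' hides the main construction, which in Yaman's paper is carried out via a carefully chosen system of annuli and an associated crossratio/quasimetric, not by an ad hoc choice of sets; and the hyperbolicity argument does not really go through a ``guessing geodesics'' criterion but rather through Bowditch's machinery relating annulus systems to hyperbolic quasimetrics. Your sketch also glosses over why the horoball system can be chosen equivariantly with the disjointness and thick-part cocompactness you want; this is genuine work (again done in Tukia/Bowditch/Yaman). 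None of this is wrong in spirit, but the proposal as written is a plan rather than a proof, and the substantive content---hyperbolicity and fineness of $K$---is precisely what is asserted rather than argued.
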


\begin{proof}[Outline of proof of Theorems \ref{t:bowditch} and \ref{t:decomp}]

We prove Theorem \ref{t:bowditch} by constructing a space $M$ on which $G$
acts as a geometrically finite convergence group, so that the
parabolic point stabilizers are all conjugate to elements of $\mc{H}$.
The space $M$ is a quotient of $\partial G$, constructed as follows.
The hypotheses on $\mc{H}$ imply that the boundaries $\partial H_i$
embed in $\partial G$ for each $i$, and that $g \partial H_i \cap h
\partial H_j$ is empty unless $i=j$ and $g^{-1} h \in H_i$.  Let 
\[\mc{A} =
\{g \partial H_i\mid g\in G\mbox{, and }H_i\in \mc{H}\},\]
and let
\[ \mc{B} = \{ \{x\}\mid x\in \partial G\setminus \bigcup A\}. \]
The union $\mc{C} = \mc{A}\cup \mc{B}$ is therefore a decomposition of $\partial
G$ into closed sets.  We let $M$ be the quotient topological space 
$\partial G / \mc{C}$ and write $A,B$ for the images of $\mc{A},\mc{B}$, respectively.
There is clearly an action of $G$ on $M$ by
homeomorphisms.

We now have a sequence of four claims, which we prove in Section \ref{sec:proofs}.
\begin{claim}\label{c:top}
  $M = A\cup B$ is a perfect metrizable space.
\end{claim}

\begin{claim}\label{c:dyn}
  $G$ acts as a convergence group on $M$.
\end{claim}

\begin{claim}\label{c:bpp}
  For $x\in A$, $x$ is a bounded parabolic point, with stabilizer
  conjugate to an element of $\mc{H}$.
\end{claim}

\begin{claim}\label{c:clp}
  For $x\in B$, $x$ is a conical limit point.
\end{claim}
Given the claims, we may apply Yaman's theorem \ref{t:yaman} to
conclude that the pair $(G,\mc{H})$ is relatively hyperbolic (Theorem \ref{t:bowditch}), and that the Bowditch boundary is equivariantly homeomorphic to $\partial G/\mc{C}$ (Theorem \ref{t:decomp}).
\end{proof}

\section{Proofs of claims}\label{sec:proofs}

In what follows we fix some $\delta$-hyperbolic Cayley graph $\Gamma$
of $G$.  We'll use the notation $a\mapsto \bar{a}$ for the map from
$\partial G$ to the decomposition space $M$.

\subsection{Claim \ref{c:top}}  In this subsection we show the decomposition space $M$ is perfect and metrizable.
We first recall the terminology of upper semicontinuous decomposition spaces.
Let $X$ be a topological space, and let $\mc{D}\subseteq 2^X$ be a decomposition of $X$ into compact subsets.  The quotient $X/\mc{D}$ is the \emph{decomposition space}.  As a set $X/\mc{D} = \mc{D}$, topologized so that $A\subset\mc{D}$ is open in $X/\mc{D}$ if and only if $\bigcup A$ is open in $X$.

A subset of $X$ is \emph{$\mc{D}$--saturated} if it is a union of elements of $\mc{D}$.  
  The decomposition is said to be \emph{upper semicontinuous} if for every $D\in \mc{D}$ and every open set $U$ of $X$ containing $D$, there is a $\mc{D}$--saturated open $V$ so that $D\subseteq V\subseteq U$.  

  We have the following well-known characterization.
  \begin{proposition}\label{uscchar}
    Let $X$ be compact metrizable, and let $\mc{D}$ be a decomposition of $X$ into closed subsets.  The following are equivalent:
    \begin{enumerate}
    \item\label{usc} $\mc{D}$ is upper semicontinuous.
    \item\label{cm} $X/\mc{D}$ is compact metrizable.
    \end{enumerate}
  \end{proposition}
  \begin{proof}
    For \eqref{usc}$\implies$\eqref{cm}, see \cite[p. 13, Proposition 2]{DavermanDoM}.  For \eqref{cm}$\implies$\eqref{usc}, see \cite[p. 132, Theorem 3-31]{HY}.
  \end{proof}

  A countable collection of subsets $\mc{N}$ of a metric space $X$ is said to be a \emph{null sequence} if, for all $\epsilon>0$, there are only finitely many $N\in \mc{N}$ of diameter greater than $\epsilon$.  We need the following useful fact.
  \begin{proposition}\cite[p. 14, Proposition 3]{DavermanDoM}\label{nullseq}
    Suppose $X$ is a metric space, and $\mc{D}$ is a decomposition so that the collection of nondegenerate elements of $\mc{D}$ is a null sequence.  Then $\mc{D}$ is upper semicontinuous.
  \end{proposition}

\begin{proof}[Proof of Claim \ref{c:top}]

  We first note that the collection $\mc{A}\subset \mc{C}$ of limit sets of cosets is a null sequence (see \cite[Corollary 2.5]{GMRS98}), and that $\mc{A}$ consists precisely of the nondegenerate elements of $\mc{C}$.   Applying Proposition \ref{nullseq} we see that $\mc{C}$ is an upper semicontinuous decomposition of $\partial{G}$.  
    Proposition \ref{uscchar} then implies that $M = \partial G/\mc{C}$ is compact metric. 
  
  We now show $M$ is perfect.  Let $p\in M$.

  Suppose first
  that $p\in B$, i.e., that the preimage in $\partial G$ is a single
  point $\tilde{p}$.
  Because $G$ is nonelementary,
  $\partial G$ is perfect.  Thus there is a sequence of points $x_i
  \in \partial G\setminus\{\tilde{p}\}$ limiting on $p$.  The image of
  this sequence limits on $p$.

  Now suppose that $p\in A$, i.e., the preimage of $p$ in $\partial G$
  is equal to $g \partial H$ for some $g\in G$ and some $H\in
  \mc{H}$.  Choose any point $x \in \partial G\setminus \partial H$,
  and any infinite order element $h$ of $gHg^{-1}$.  The points $h^i
  x$ project to distinct points in $M\setminus\{p\}$, limiting on $p$.
\end{proof}

\subsection{Claim \ref{c:dyn}}  Next we show the action of $G$ on $M$ is a convergence action.
In \cite{Bo99}, Bowditch gives a characterization of
convergence group actions in terms of \emph{collapsing sets}.  We
rephrase Bowditch slightly in what follows.
\begin{definition}\label{d:collapse}
  Let $G$ act by homeomorphisms on $M$.
  Suppose that $\{g_i\}$ is a sequence of distinct elements of $G$.
  Suppose that there exist points $a$ and $b$ (called the
  \emph{repelling} and \emph{attracting} points, respectively) so that
  whenever $K\subseteq M\setminus \{a\}$ and $L\subseteq
  M\setminus\{b\}$ are compact, the set $\{i\mid g_i K\cap L \neq \emptyset\}$ is
  finite.  Then $\{g_i\}$ is a \emph{collapsing sequence}.
\end{definition}

\begin{proposition}\cite[Proposition 1.1]{Bo99}\label{p:collapse}
  Let $G$, a countable group,
  act on $M$, a compact Hausdorff space with at least $3$ points.
  Then $G$ acts as a convergence group if and only if every infinite
  sequence in $G$ contains a subsequence which is collapsing.
\end{proposition}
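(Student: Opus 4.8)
The plan is to establish both implications by analyzing the limiting dynamics of a sequence of distinct elements. First I would record a convenient reformulation of Definition~\ref{d:collapse}, reading the sets $K$ and $L$ there as compact (which is needed, and which is what the definition intends): a sequence $\{g_i\}$ of distinct elements is collapsing with attracting point $a$ and repelling point $b$ precisely when $g_i\to b$ locally uniformly on $M\setminus\{a\}$, i.e.\ when for every compact $K\subseteq M\setminus\{a\}$ and every open $U\ni b$ one has $g_iK\subseteq U$ for all large $i$; and, since the same statement applied to $\{g_i^{-1}\}$ (whose data is $(b,a)$) gives the reverse, this is equivalent to $g_i^{-1}\to a$ locally uniformly on $M\setminus\{b\}$. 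Both directions of the proposition thus relate proper discontinuity on $\Theta^3(M)$ to this ``north--south''-type behavior of subsequences.

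\emph{Collapsing subsequences imply the convergence property.} Suppose the action on $\Theta^3(M)$ is not properly discontinuous: there is a compact $Q\subseteq\Theta^3(M)$ and a sequence of distinct $g_i$ with $g_iQ\cap Q\neq\emptyset$, so there are $T_i\in Q$ with $g_iT_i\in Q$. Replace $\{g_i\}$ by a collapsing subsequence, with data $(a,b)$; passing to a further subsequence (using that $M$ is separable, or else a subnet), write $T_i=\{x^i_1,x^i_2,x^i_3\}$ and $g_iT_i=\{g_ix^i_1,g_ix^i_2,g_ix^i_3\}$ with $x^i_j\to x_j$ and $g_ix^i_j\to y_j$, where the triples $\{x_1,x_2,x_3\}$ and $\{y_1,y_2,y_3\}$ lie in $\Theta^3(M)$. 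At most one $x_j$ equals $a$ and at most one $y_j$ equals $b$, so some index $j$ has $x_j\neq a$ and $y_j\neq b$. Choose a closed neighborhood $K$ of $x_j$ missing $a$ and a closed neighborhood $L$ of $y_j$ missing $b$; then $x^i_j\in K$ and $g_ix^i_j\in L$ for all large $i$, so $g_iK\cap L\neq\emptyset$ for infinitely many $i$, contradicting that $\{g_i\}$ is collapsing with data $(a,b)$. Hence the action is a convergence action.

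\emph{The convergence property implies collapsing subsequences.} Let $G$ act as a convergence group and let $\{g_i\}$ be a sequence of distinct elements (we may assume $M$ infinite, else $G$ is finite and there is nothing to prove). Fix a countable dense $D\subseteq M$ and pass to a subsequence so that $g_i$ and $g_i^{-1}$ converge pointwise on $D$ to maps $\phi,\psi\co D\to M$ (again, subnets if $M$ is not metrizable). The engine is the \emph{no-coincidence principle} coming from proper discontinuity: there are no sequences $S_i,S_i'\in\Theta^3(M)$ with $S_i\to S\in\Theta^3(M)$, $S_i'\to S'\in\Theta^3(M)$ and $g_iS_i=S_i'$ --- otherwise a compact set having $S$ and $S'$ in its interior is moved onto itself by infinitely many distinct $g_i$. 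Applying this with $S_i=\{d_1,d_2,d_3\}$ a constant triple in $D$ forces $|\phi(D)|\leq 2$, and symmetrically $|\psi(D)|\leq 2$; a more careful application to triples one of whose coordinates is of the form $g_i^{-1}(d)$ (so that its $g_i$-image is the constant point $d\in D$) shows further that there are points $a,b\in M$ with $\phi(d)=b$ for all $d\in D\setminus\{a\}$ and $\psi(d)=a$ for all $d\in D\setminus\{b\}$. Finally I would upgrade pointwise convergence on $D$ to local uniform convergence on $M\setminus\{a\}$: if some compact $K\subseteq M\setminus\{a\}$ carried points $x_i\in K$ with $g_ix_i\not\to b$, pass to a subsequence with $x_i\to x\neq a$ and $g_ix_i\to c\neq b$; then for distinct $d,e\in D\setminus\{a,b,c,x\}$ the triples $\{g_i^{-1}d,x_i,e\}$ lie in $\Theta^3(M)$ for large $i$ and converge to $\{a,x,e\}\in\Theta^3(M)$, while their $g_i$-images $\{d,g_ix_i,g_ie\}$ converge to $\{d,c,b\}\in\Theta^3(M)$, contradicting the no-coincidence principle. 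Hence $\{g_i\}$ has a subsequence with $g_i\to b$ locally uniformly on $M\setminus\{a\}$, i.e.\ a collapsing subsequence.

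The main obstacle is the middle step of the last paragraph: promoting ``$|\phi(D)|\leq 2$'' to ``$\phi$ is constant equal to $b$ off a single point $a$'' (and the matching statement for $\psi$, with the \emph{same} two points). This requires a somewhat delicate case analysis with the no-coincidence principle --- one has to keep track of whether the exceptional points of $\phi$ and $\psi$ lie in $D$, and to choose the auxiliary coordinates of each triple so that all the triples involved genuinely have distinct coordinates (both before and after applying $g_i$) --- and it is precisely here that one uses that $G$ acts by \emph{homeomorphisms} rather than by arbitrary continuous maps, since the argument feeds points $g_i^{-1}(d)$ into the triples. The only set-theoretic subtlety, that $M$ is assumed merely compact Hausdorff so that ``subsequence'' and ``countable dense subset'' are being used, can be handled either by noting that $M$ is metrizable in every situation where the proposition is applied in this paper, or, in general, by replacing ``sequence'' by ``net'' throughout and phrasing the conclusion accordingly.
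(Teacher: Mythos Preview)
The paper does not prove this proposition; it is quoted as \cite[Proposition 1.1]{Bo99} and used as a black box in the proof of Claim~\ref{c:dyn}.  So there is nothing in the paper to compare your argument against.

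That said, your outline is essentially the standard route to this result and is along the same lines as Bowditch's original argument in \cite{Bo99}.  A couple of remarks.  First, your reformulation of collapsing in terms of locally uniform convergence is correct and is exactly what makes the argument run.  Second, your ``no-coincidence principle'' is the right engine for the harder implication, and you have correctly identified the delicate point: passing from $|\phi(D)|\le 2$ to the statement that $\phi$ is constant off a single point, with the exceptional points of $\phi$ and $\psi$ matching up as the pair $(a,b)$.  Your sketch of how to handle this (feeding points of the form $g_i^{-1}(d)$ into the triples) is the right idea, but as written it is only a plan; the actual case analysis takes some care, and you should be prepared to pass to further subsequences several times.  Third, your treatment of the metrizability issue is adequate for the purposes of this paper, since the only $M$ to which the proposition is applied here is the quotient of $\partial G$, which Claim~\ref{c:top} shows is metrizable; for the general compact Hausdorff statement one does indeed need nets (and Bowditch works in that generality).
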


\begin{proof}[Proof of Claim \ref{c:dyn}]
  We use the characterization of \ref{p:collapse}.  Let $\{\gamma_i\}$ be
  an infinite sequence in $G$.  Since the action of $G$ on $\partial
  G$ is convergence, there is a collapsing subsequence $\{g_i\}$ of
  $\{\gamma_i\}$; i.e., there are points $a$ and $b$ in $\partial G$
  which are  repelling and attracting in the sense of Definition
  \ref{d:collapse}.   We will show that $\{g_i\}$ is also a collapsing
  sequence for the action of $G$ on $M$, and that the images $\bar{a}$ and
  $\bar{b}$ in $M$ are the repelling and attracting points for this sequence.

  Let $K\subseteq M\setminus \{\bar{a}\}$ and $L\subseteq M\setminus \{\bar{b}\}$
  be compact sets, and let $\tilde{K}$ and $\tilde{L}$ be the
  preimages of $K$ and $L$ in $\partial G$.  We have $\tilde{K}
  \subseteq \partial G\setminus \{a\}$ and $\tilde{L}\subseteq
  \partial G\setminus \{b\}$, so $\{i\mid g_i \tilde{K}\cap
  \tilde{L}\neq\emptyset\}$ is finite.  But for each $i$, $g_i K\cap L
    = \pi(g_i\tilde{K} \cap \tilde{L})$, so $\{i\mid g_i K\cap L\}$ is
    also finite.  
\end{proof}
\begin{remark}
  In the preceding proof it is possible for $a$ and $b$ to be
  distinct, but $\bar{a} = \bar{b}$.
\end{remark}

\subsection{Claim \ref{c:bpp}} We next show the nondegenerate sets in the decomposition give rise to bounded parabolic points.
\begin{proof}[Proof of Claim \ref{c:bpp}]
  Let $p\in A\subseteq M$ be the image of $g \partial H$ for $g\in G$ and $H\in
  \mc{H}$.  Let $P = g H g^{-1}$.  Since $H$ is equal to its own
  commensurator, so is $P$, and $P = \stab_G(p)$.  We must show that
  $P$ acts cocompactly on $M\setminus \{p\}$.    The subgroup
  $P$ is $\lambda$-quasiconvex in $\Gamma$ (the Cayley graph of $G$)
  for some $\lambda>0$.  Let
  $N$ be a closed $R$-neighborhood of $P$ in $\Gamma$ for some large
  integer $R$, with $R > 2\lambda+10\delta$.  Note that any geodesic
  from $1$ to a point in $\partial H$ stays inside $N$, and any
  geodesic from $1$ to a point in $\partial G\setminus \partial P$
  eventually leaves $N$.  Write $\mathrm{Front}(N)$ for the frontier of $N$.

  Let $K =  \{g\in \mathrm{Front}(N)\mid d(g,1)\leq 2 R + 100\delta \}$. Let
  $E$ be the
  set of points $e\in \partial X$ so that there is a geodesic from $1$
  to $e$ passing through $K$.  The set $E$ is compact, and lies
  entirely in $\partial G\setminus\partial P$.  We will show that $PE
  = \partial G\setminus \partial P$.  Let $e\in \partial
  G\setminus\partial P$, and let $h\in P$ be ``coarsely closest'' to $e$ in the
  following sense:  If $\{x_i\}$ is a sequence of points in $X$
  tending to $e$, then for large enough $i$, we have, for any $h'\in
  P$, $d(h,e_i)\leq d(h',e_i)+4\delta$.

  Let $\gamma$ be a geodesic ray from $h$ to $e$, and let
  $d$ be the unique point in $\gamma\cap \mathrm{Front}(N)$.  Since $d\in
  \mathrm{Front}(N)$, there is some
  $h'$ so that $d(h',d) = R$.  Let $e'$ be a point on $\gamma$ so that
  $10R<d(h,e')\leq d(h',e')+4\delta$, and consider a geodesic triangle
  made up of that part of $\gamma$ between $h$ and $e'$, some geodesic
  between $h'$ and $h$, and some geodesic between $h'$ and $e'$.  This
  triangle has a corresponding comparison tripod, as in Figure
  \begin{figure}
    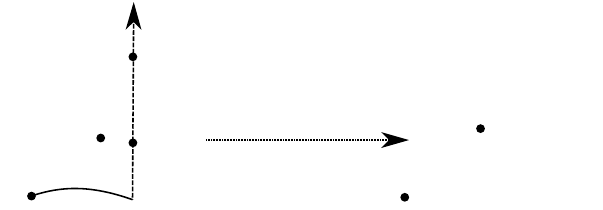
    \caption{Bounding the distance from $h$ to $d$.}
    \label{tripod}
  \end{figure}
  \ref{tripod}.
  Since any geodesic from $h'$ to $h$ must stay $R-\lambda>\delta$ away from
  $\mathrm{Front}(N)$, the point $\bar{d}$ must lie on the leg of the tripod
  corresponding to $e'$.  Let $d'$ be the point on the geodesic from
  $h'$ to $e'$ which projects to $\bar{d}$ in the comparison tripod.
  Since $d(h',d) = R$, $d(h',d')\leq R+\delta$.  Now notice that 
  \begin{eqnarray*} 
    d(h, d) & \leq & d(h',d') + (e',h')_h - (e',h)_{h'}\\
    & \leq & d(h',d')+4\delta\\
    & \leq & R+5\delta.
  \end{eqnarray*}
  But this implies that the geodesic from $1$ to $h^{-1}e$ passes
  through $K$, and so $h^{-1}e \in E$ and $e\in hE$.  Since $e$ was arbitrary in
  $\partial G\setminus \partial P$, we have $PE = \partial
  G\setminus\partial P$, and so the action of $P$ on $\partial
  G\setminus \partial P$ is cocompact.  If $\bar{E}$ is the (compact)
  image of $E$ in $M$, then $PE = M\setminus\{p\}$, and so $p$ is a
  bounded parabolic point.
\end{proof}

\subsection{Claim \ref{c:clp}} Lastly, we must show that the remaining points of $M$ are conical limit points. 
The proofs of the first two lemmas are left to the reader.
\begin{lemma}\label{l:bci}
  For all $R>0$ there is some $D$, depending only on $R$, $G$, $\mc{H}$,
  and $S$, so that for any $g$, $g'\in G$, and $H$, $H'\in \mc{H}$, 
  \begin{equation*}
    \mathrm{diam}(N_R(gH)\cap N_R(g'H'))<D.
  \end{equation*}
  ($N_R(Z)$ denotes the $R$-neighborhood of $Z$ in the Cayley graph $\Gamma = \Gamma(G,S)$.)
\end{lemma}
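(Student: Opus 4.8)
The bound is only meaningful when $gH$ and $g'H'$ are distinct left cosets: if $gH=g'H'$ the two sets coincide, and $N_R(gH)$ has infinite diameter whenever $H$ is infinite.  So the plan is to prove the lemma under the (needed and presumably intended) hypothesis $gH\neq g'H'$.  The argument has two ingredients: a translation trick that reduces the \emph{uniform} bound to a statement about finitely many pairs of cosets, and a pigeonhole argument that bounds the intersection for any one such pair using almost malnormality of $\mc H$.

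For the reduction, note that left translation by any $a\in G$ is an isometry of $\Gamma$ carrying $N_R(gH)$ to $N_R(agH)$, hence preserves both $\mathrm{diam}(N_R(gH)\cap N_R(g'H'))$ and the relation $gH\neq g'H'$.  If the intersection is nonempty, translate one of its vertices to $1$; then both cosets meet the ball $B_R(1)$, which is finite because $S$ is.  A coset of an element of $\mc H$ meeting $B_R(1)$ has the form $\gamma H_i$ with $\gamma\in B_R(1)$ and $H_i\in\mc H$, so there are only finitely many of them.  It therefore suffices to bound $\mathrm{diam}(N_R(C)\cap N_R(C'))$ over the finitely many ordered pairs of distinct cosets $C,C'$ from this list; the maximum, plus $1$, is a constant $D$ depending only on $R$, $G$, $\mc H$, and $S$.

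For the per-pair bound, fix distinct cosets $gH\neq g'H'$ and suppose $N_R(gH)\cap N_R(g'H')$ is unbounded.  Since $\Gamma$ is proper, one can pick vertices $z_n$ in this intersection with $d(1,z_n)\to\infty$, together with nearest points $a_n=g\alpha_n\in gH$ and $b_n=g'\beta_n\in g'H'$ (where $\alpha_n\in H$, $\beta_n\in H'$), so that $d(a_n,b_n)\le 2R$.  Writing $k=g^{-1}g'$, the elements $a_n^{-1}b_n=\alpha_n^{-1}k\beta_n$ all lie in the finite ball $B_{2R}(1)$, so after passing to a subsequence they are all equal; then the identities $\alpha_n^{-1}k\beta_n=\alpha_m^{-1}k\beta_m$ rearrange to $k^{-1}(\alpha_m\alpha_n^{-1})k=\beta_m\beta_n^{-1}\in H'$, whence $\alpha_m\alpha_n^{-1}\in H\cap kH'k^{-1}$ for all $n,m$.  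Because $d(1,\alpha_n)\ge d(1,z_n)-R-d(1,g)\to\infty$, the $\alpha_n$ take infinitely many values, so $H\cap kH'k^{-1}$ is infinite; almost malnormality of $\mc H$ then forces $H=H'$ and $k\in H$, i.e.\ $gH=g'H'$, a contradiction.

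I do not expect a serious obstacle: almost malnormality does all the real work, and the only points requiring care are that the constant be genuinely uniform (handled by translating a vertex of the intersection to $1$, with no appeal to compactness of $\partial G$) and that the $\alpha_n$ genuinely vary.  Two remarks.  First, quasiconvexity of $\mc H$ is not actually used in this argument, and the case that $H$ or $H'$ is finite is immediate since then $N_R(gH)$ is already bounded.  Second, a less hands-on route to the per-pair bound is to observe that an unbounded $N_R(gH)\cap N_R(g'H')$ must accumulate on a point of $\partial G$ lying in both limit sets $g\partial H$ and $g'\partial H'$ (each coset being at finite Hausdorff distance from its neighborhood, hence having the same limit set), which contradicts the disjointness of distinct translates of the $\partial H_i$ recorded in the outline of the proof of Theorem \ref{t:bowditch}.
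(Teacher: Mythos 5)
Your proof is correct, but note that there is nothing in the paper to compare it with: Lemma \ref{l:bci} is stated there without proof (as is Lemma \ref{l:lambda}), being treated as a standard consequence of the hypotheses, so your argument genuinely fills a gap rather than duplicating one. Two features of your write-up are worth keeping. First, you correctly identify the implicit hypothesis $gH\neq g'H'$ (as literal sets the statement fails when the cosets coincide and $H$ is infinite); this is indeed how the lemma is used in the proof of Lemma \ref{l:sequence}, where the second coset is ``some other'' $g'H'$. Second, your argument uses only local finiteness of $\Gamma$ (finiteness of $S$) and almost malnormality -- the translation trick reduces uniformity to finitely many pairs of cosets meeting a fixed ball, and the pigeonhole on the boundedly many values of $a_n^{-1}b_n$ converts an unbounded intersection of neighborhoods into an infinite subgroup of $H\cap kH'k^{-1}$, which almost malnormality forbids for distinct cosets -- so neither quasiconvexity nor hyperbolicity is needed, and the lemma holds in any finitely generated group; by contrast your alternative remark via limit sets does lean on the disjointness of translates of the $\partial H_i$, which the paper also asserts without proof, so the algebraic route is the more self-contained one. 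The only (cosmetic) point to tidy is that a point of $N_R(gH)\cap N_R(g'H')$ need not be a vertex; translating by a group element within distance $1$ of such a point and enlarging $R$ by $1$ in the finite list of cosets handles this without change to the argument.
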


\begin{lemma}\label{l:lambda}
  There is some $\lambda$ depending only on $G$, $\mc{H}$, and $S$, so
  that if $x$, $y\in gH\cup g\partial H$, then any geodesic from $x$
  to $y$ lies in a $\lambda$-neighborhood of $gH$ in $\Gamma$.
\end{lemma}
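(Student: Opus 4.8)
The plan is to reduce at once to the case $g=1$ and then to handle separately the situations in which zero, one, or both of $x,y$ lie in $\partial H$. Left translation by $g$ is an isometry of $\Gamma=\Gamma(G,S)$ whose extension to $\partial G$ carries $H$ onto $gH$ and $\partial H$ onto $g\partial H$, so it suffices to find, for each $H\in\mc{H}$ separately, a constant that works for geodesics between points of $H\cup\partial H$, and then to take the maximum over the finite collection $\mc{H}$. I would fix once and for all a hyperbolicity constant $\delta$ for $\Gamma$ and a single quasiconvexity constant $\lambda_0$ valid for every $H\in\mc{H}$ (possible since $\mc{H}$ is finite), and claim that $\lambda:=\lambda_0+6\delta$ works.

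If $x,y\in H$ this is just quasiconvexity: $[x,y]\subseteq N_{\lambda_0}(H)\subseteq N_\lambda(H)$. If $x,y\in\partial H$, this is precisely the observation recorded in the proof of Lemma~\ref{l:norepeat}: in the proper $\delta$-hyperbolic space $\Gamma$ bi-infinite geodesics between ideal points exist and span $3\delta$-thin triangles, so a geodesic between two limit points of the $\lambda_0$-quasiconvex set $H$ lies within $\lambda_0+6\delta=\lambda$ of $H$. For the remaining, mixed case, say $x\in H$ and $y\in\partial H$, I would choose $y_i\in H$ with $y_i\to y$, note that $[x,y_i]\subseteq N_{\lambda_0}(H)$ by quasiconvexity, and pass (by Arzel\`a--Ascoli, using properness of $\Gamma$) to a subsequential limit, which is a geodesic ray from $x$ to $y$ lying in the closed set $N_{\lambda_0}(H)$.

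The only genuine subtlety is that the lemma asks about \emph{every} geodesic joining $x$ and $y$, not merely about a conveniently chosen limiting one. This is dealt with by the standard fact that in a $\delta$-hyperbolic space any two geodesics (segments, rays, or lines) sharing the same pair of endpoints $2\delta$-fellow-travel; applying this in the mixed case promotes the limiting ray above to every geodesic ray from $x$ to $y$, all of which therefore lie in $N_{\lambda_0+2\delta}(H)\subseteq N_\lambda(H)$, and the same remark covers any looseness about "a geodesic" versus "every geodesic" in the ideal--ideal case. Since $\lambda=\lambda_0+6\delta$ absorbs all the additive multiples of $\delta$ appearing above, the final constant depends only on $\delta$ and $\lambda_0$, hence only on $G$, $\mc{H}$, and $S$, as required. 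I do not expect any of these steps to present real difficulty; the content of the lemma is essentially bookkeeping to make the constant uniform over the finitely many $H\in\mc{H}$ and over all of their $G$--translates.
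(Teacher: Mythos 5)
Your argument is correct, and in fact the paper never proves Lemma~\ref{l:lambda} at all: it is stated without proof, as a standard fact about quasiconvex subsets of a proper $\delta$-hyperbolic space, with the key ideal--ideal case already invoked verbatim in the proof of Lemma~\ref{l:norepeat} (``a geodesic between limit points of a $\lambda$-quasiconvex set lies within $\lambda+6\delta$ of the quasiconvex set''). So there is nothing in the paper to compare against beyond that remark, and your write-up is exactly the expected argument: reduce to $g=1$ by the isometry of left translation (which also carries $\partial H$ to $g\partial H$ on the boundary), take a uniform quasiconvexity constant over the finite collection $\mc{H}$, handle the three cases (both endpoints in $H$, both ideal, mixed) by quasiconvexity, thinness of ideal triangles, and an Arzel\`a--Ascoli limit of geodesics $[x,y_i]$ respectively, and then upgrade from ``some geodesic'' to ``every geodesic'' via the fact that geodesics with the same (possibly ideal) endpoints uniformly fellow-travel. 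Two small cautions, neither of which affects correctness: in the mixed case you should note explicitly why the limiting ray has ideal endpoint $y$ (the Gromov products $(\sigma(t)\mid y_i)_x$ are forced to be large, so $\sigma(\infty)=\lim y_i=y$); and the precise coefficients of $\delta$ ($2\delta$ for fellow-traveling, $6\delta$ in the ideal case) are convention-dependent, so rather than insisting that $\lambda_0+6\delta$ literally absorbs everything, it is cleaner to say that $\lambda$ may be taken to be $\lambda_0$ plus a fixed multiple of $\delta$, which is all the lemma and its later uses require.
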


\begin{lemma}\label{l:sequence}
  Let $\gamma\co\R_+\to \Gamma$ be a (unit speed) geodesic ray, so
  that $x=\lim_{t\to\infty}\gamma(t)$ is not in the limit set of $gH$
  for any $g\in G$, $H\in \mc{H}$, and so that $\gamma(0)\in G$.  Let $C>0$.
  There is a sequence of numbers $\{n_i\}$ tending to infinity, and a
  constant $\chi$, so that the following holds, for all $i\in \N$:
  If $x_i = \gamma(n_i) \in N_C(gH)$ for $g\in G$ and $H\in \mc{H}$,
  then 
  \begin{equation*}
    \mathrm{diam}\left(N_C(gH)\cap \gamma([n_i,\infty))\right)<\chi
  \end{equation*}
\end{lemma}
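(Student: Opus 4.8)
My plan is to show that the geodesic ray $\gamma$ fellow-travels each coset $gH$ only for a bounded length of time, that the \emph{long} such fellow-traveling stretches are coarsely pairwise disjoint as a consequence of Lemma~\ref{l:bci}, and then to take for $\{n_i\}$ a sequence of integers each of which sits near the \emph{end} of every long stretch containing it.

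First I would set up the stretches. For every left coset $gH$ (with $H\in\mc{H}$, $g\in G$) whose $C$--neighborhood meets $\gamma(\R_+)$, define
\[ a(gH)=\min\{t\ge 0: \gamma(t)\in N_C(gH)\},\qquad b(gH)=\sup\{t\ge 0:\gamma(t)\in N_C(gH)\}. \]
The first point to nail down is that $b(gH)<\infty$: otherwise there are $t_j\to\infty$ with $\gamma(t_j)$ within $C$ of $gH$, so the endpoint $x=\lim_{t\to\infty}\gamma(t)$ would lie in the limit set of $gH$, contrary to hypothesis. A thin--quadrilateral estimate combined with Lemma~\ref{l:lambda} (applied to the geodesic segment $\gamma|_{[a(gH),b(gH)]}$, whose endpoints lie within $C$ of points of $gH$) then shows that $\gamma([a(gH),b(gH)])\subseteq N_{\lambda_1}(gH)$, where $\lambda_1=\max(C,\lambda)+2\delta$ and $\lambda$ is the constant of Lemma~\ref{l:lambda}.

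Next I would run the combinatorics. Let $D_1$ be the constant supplied by Lemma~\ref{l:bci} for $R=\lambda_1$, and set $\chi=D_1+2$. Call the interval $I_{gH}=[a(gH),b(gH)]$ \emph{long} if $b(gH)-a(gH)\ge\chi$. If $I_{gH}$ and $I_{g'H'}$ are distinct and overlap in a subinterval $[\alpha,\beta]$, then $\gamma([\alpha,\beta])\subseteq N_{\lambda_1}(gH)\cap N_{\lambda_1}(g'H')$, so $\beta-\alpha<D_1$ by Lemma~\ref{l:bci}. From this I would deduce: no long interval is contained in another (a nested long interval would fill the overlap, yet have length $\ge\chi>D_1$); so listing the long intervals by left endpoint, $a_1<a_2<\cdots$, also lists them by right endpoint, $b_1<b_2<\cdots$; any two of the left endpoints differ by more than $\chi-D_1=2$; and consecutive long intervals satisfy $b_k-a_{k+1}<D_1$. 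Hence the trimmed intervals $J_k=[a_k,\,b_k-\chi]$ are pairwise disjoint, occur in order, and are separated by gaps of length $>\chi-D_1=2$; and since the $a_k$ tend to infinity when there are infinitely many long intervals, these gaps escape to infinity. I would then take $\{n_i\}$ to be an increasing sequence of integers, one drawn from each gap (or, when there are only finitely many long intervals, any integers beyond the last $J_k$), so that no $n_i$ lies in any $J_k$.

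Finally I would verify the conclusion. Suppose $\gamma(n_i)\in N_C(gH)$. Then $n_i\ge a(gH)$, and $\gamma([n_i,\infty))\cap N_C(gH)\subseteq\gamma([n_i,b(gH)])$, which has diameter $b(gH)-n_i$. If $I_{gH}$ is long then $n_i\notin J_{gH}=[a(gH),b(gH)-\chi]$, which together with $n_i\ge a(gH)$ forces $b(gH)-n_i<\chi$; if $I_{gH}$ is short then $b(gH)-n_i\le b(gH)-a(gH)<\chi$ immediately. In either case $\mathrm{diam}\!\left(N_C(gH)\cap\gamma([n_i,\infty))\right)<\chi$, as required. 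The step I expect to be the main obstacle is the combinatorial one: squeezing from the coarse-disjointness inequality enough structure on the long fellow-traveling intervals to guarantee that their initial pieces $J_k$ fail to cover a cofinal set of integers. The remaining ingredients are a definition together with two short hyperbolic-geometry estimates.
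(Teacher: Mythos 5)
Your proof is correct and rests on the same ingredients as the paper's: finiteness of each fellow-traveling interval because $x$ avoids the limit sets, Lemma~\ref{l:lambda} to promote the endpoints' membership in $N_C(gH)$ to containment of the whole subsegment in a slightly larger neighborhood, and Lemma~\ref{l:bci} to force distinct cosets' long fellow-traveling stretches to be coarsely disjoint, with $n_i$ chosen within $\chi$ of the end of any long stretch containing it. The only difference is organizational: you enumerate all long intervals globally, trim them, and pick integers in the gaps, whereas the paper constructs the $n_i$ greedily by jumping to a point a fixed distance $\chi/2$ before the supremum of the offending interval and invoking Lemma~\ref{l:bci} to rule out a second offending coset at that point.
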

\begin{proof}
  Let $\lambda$ be the quasi-convexity constant from Lemma \ref{l:lambda}.
  Let $D$ be the constant obtained from Lemma \ref{l:bci}, setting
  $R=C+\lambda+2\delta$, and let $\chi=2 D$. 

  We define $n_i$ inductively.  
  Let $i\in \N$.  If $i = 1$, set $t_1=0$;
  otherwise set $t_i = n_{i-1}+1$.  We will find $n_i\geq t_i$ satisfying
  the condition in the statement.

  If setting $n_i=t_i$ does not work, then there must be some $gH$ with $g\in
  G$ and $H\in \mc{H}$ satisfying $\gamma(t_i)\in N_C(gH)$ and 
  \begin{equation*}
    \mathrm{diam}\left(N_C(gH)\cap \gamma([t_i,\infty))\right)\geq \chi.
  \end{equation*}
  Let $s = \sup\{t\mid \gamma(t)\in N_C(gH)\}$.    We claim
  that we can choose 
  \[n_i = s - \frac{\chi}{2} = s - D.\]
  Clearly we have
  \begin{equation*}
    \mathrm{diam}\left(N_C(gH)\cap \gamma([n_i,\infty))\right)<\chi.
  \end{equation*}
  Now suppose that some other $g'H'$ satisfies
  $x_i = \gamma(n_i)\in  N_C(g'H')$ and 
  \begin{equation*}
    \mathrm{diam}\left(N_C(g'H')\cap \gamma([n_i,\infty))\right)\geq \chi.
  \end{equation*}
  There is then some $\Delta\ge 0$ so that $\gamma(s+D+\Delta)$ is within $C$ of $g'H'$.
  It is straightforward to show (see Figure
  \begin{figure}
    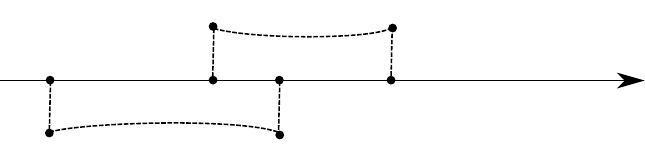
    \caption{The segment $\gamma|_{[n_i,s]}$ is close to both $gH$ and $g'H'$.}
    \label{quads}
  \end{figure}
  \ref{quads}) that $\gamma(n_i)$ and
  $\gamma(s)$ lie both in the $C+\lambda+2\delta$ neighborhood of
  $gH$ and in the $C+\lambda+2\delta$ neighborhood of $g'H'$.  Since
  $d(\gamma(n_i),\gamma(s))=s-n_i=D$, this contradicts Lemma \ref{l:bci}.
\end{proof}
\begin{proof}[Proof of Claim \ref{c:clp}]
  Let $x\in B= \partial G\setminus \cup \mc{A}$.  We must show that
  $\bar{x}\in M$ is a conical limit point for the action of $G$ on
  $M$.  Fix some $y\in M\setminus\{x\}$, and let $\gamma$ be a
  geodesic from $y$ to $x$ in $\Gamma$.  Let $C = \lambda+6\delta$,
  where $\lambda$ is the constant from Lemma \ref{l:lambda}.  Using
  Lemma \ref{l:sequence}, we can choose a sequence of (inverses of)
  group elements $\{{x_i}^{-1}\}$ in the image of $\gamma$ so that
  whenever $x_i\in N_C(gH)$ for some $g\in G$, $H\in \mc{H}$, and
  $i\in \N$, we have 
  \begin{equation}\label{diambound}
    \mathrm{diam}\left(N_C(gH)\cap \gamma([n_i,\infty))\right)<\chi,
  \end{equation}
  for some constant $\chi$ independent of $g$, $H$, and $i$.

  Now consider the geodesics $x_i\gamma$.  They all pass through $1$,
  so we may pick a subsequence $\{x_i'\}$ so that the geodesics
  $x_i'\gamma$ converge setwise to a geodesic $\sigma$ running from
  $b$ to $a$ for some $b$, $a\in \partial G$.  In fact this sequence
  $\{x_i'\}$ will satisfy $\lim_{i\to\infty}x_i'x = a$ and
  $\lim_{i\to\infty}x_i' y' = b$ for all $y'\in \partial G\setminus
  \{x\}$.  We will be able to use this sequence to see that $\bar{x}$
  is a conical limit point for the action of $G$ on $M$, \emph{unless}
  we have $\bar{a} = \bar{b}$ in $M$.

  By way of contradiction, we therefore assume that $a$ and $b$ both
  lie in $g\partial H$ for some $g\in G$, and $H\in \mc{H}$.  The
  geodesic $\sigma$ lies in a $\lambda$-neighborhood of $gH$, by Lemma
  \ref{l:lambda}.  Let $R>\chi$, and let $B_R(1)$ be the $R$--ball around the identity in the Cayley graph $\Gamma$.  The set $x_i\gamma \cap B_R(1)$ must
  eventually be constant, equal to $\sigma_R :=\sigma\cap B_R(1)$.
  Now $\sigma_R$ a geodesic segment of length $2R$ lying entirely
  inside $N_C(gH)$.  It follows that, for sufficiently large $i$,
  ${x_i'}^{-1}\sigma_R\subseteq \gamma$ lies inside
  $N_C({x_i'}^{-1}gH)$.  In particular, if ${x_i'}^{-1} = \gamma(t_i)$,
  then we have $\gamma([t_i,t_i+R))\subseteq N_C({x_i'}^{-1}gH)$.
  $R>\chi$, this contradicts \eqref{diambound}.  
\end{proof}

\section{Acknowledgments} Thanks to Saul Schleimer and the referee for helpful corrections.

\end{document}